\title[Non-contractible Hamiltonian periodic orbits in tori]%
	{Non-contractible periodic orbits in Hamiltonian dynamics on tori}
\author{Ryuma Orita} 
\address{Graduate School of Mathematical Sciences, the University of Tokyo, 3-8-1 Komaba, Meguro-ku, Tokyo 153-0041, Japan}
\email{orita@ms.u-tokyo.ac.jp}
\urladdr{https://sites.google.com/site/oritaryuma/}
\thanks{This work was supported by JSPS KAKENHI Grant Number JP02607057 and the Program for Leading Graduate Schools, MEXT, Japan.
The author was supported by the Grant-in-Aid for JSPS fellows.}
\subjclass[2010]{Primary 53D40, Secondary 37J10}
\keywords{Non-contractible periodic orbits, Hamiltonian flows, Floer--Novikov homology}
\newtheorem{theorem}{Theorem}[section]
\newtheorem{lemma}[theorem]{Lemma}
\newtheorem{proposition}[theorem]{Proposition}
\theoremstyle{definition}
\newtheorem{definition}[theorem]{Definition}
\newtheorem{example}[theorem]{Example}
\theoremstyle{remark}
\newtheorem{remark}[theorem]{Remark}
\newcommand{\Cr}{\mathop{\mathrm{Crit}}\nolimits}
\newcommand{\rank}{\mathop{\mathrm{rank}}\nolimits}
\newcommand{\Ker}{\mathop{\mathrm{Ker}}\nolimits}
\newcommand{\Image}{\mathop{\mathrm{Im}}\nolimits}
\newcommand{\relmiddle}[1]{\mathrel{}\middle#1\mathrel{}}
\begin{document}

\maketitle

\begin{abstract}
We show that the presence of one non-degenerate, non-contractible periodic
orbit of a Hamiltonian on the standard symplectic torus implies the existence of
infinitely many simple non-contractible periodic orbits.
\end{abstract}

\tableofcontents


\section{Introduction}

We consider the torus $\mathbb{T}^{2n}=(\mathbb{R}/\mathbb{Z})^{2n}$ with the standard symplectic form $\omega_{\mathrm{std}}$.
Let $H$ be a Hamiltonian on $\mathbb{T}^{2n}$ and denote by $\mathcal{P}_1(H;\alpha)$ the collection of its
one-periodic orbits with homotopy class $\alpha$.
Our main result is the following theorem.

\begin{theorem}\label{theorem:main}
Let $H\colon S^1\times \mathbb{T}^{2n}\to\mathbb{R}$ be a $($time-dependent$)$ Hamiltonian
having a non-degenerate one-periodic orbit $x$ with non-trivial homotopy class $\alpha$
such that $\mathcal{P}_1(H;\alpha)$ is finite.
Then for every sufficiently large prime $p$, the Hamiltonian $H$ has a simple periodic orbit in the homotopy class $\alpha^p$
and with period either $p$ or $p'$,
where $p'$ is the first prime greater than $p$.
\end{theorem}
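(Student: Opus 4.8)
The plan is to work with filtered Floer--Novikov homology for the free homotopy classes $\alpha^p$, using that $(\mathbb{T}^{2n},\omega_{\mathrm{std}})$ is symplectically aspherical (indeed $\pi_2(\mathbb{T}^{2n})=0$), so the theory is well defined over a Novikov ring, admits no bubbling, and is independent of the Hamiltonian up to continuation. The first thing I would record is a computation that sets the tone: every nonzero class in $H_1(\mathbb{T}^{2n};\mathbb{R})$ is represented by a nowhere vanishing closed $1$-form, so the associated Morse--Novikov complex is empty and the \emph{total} Floer--Novikov homology of $\mathbb{T}^{2n}$ in any nontrivial class vanishes. Consequently the global invariant carries no information, and all the dynamical content must come from the \emph{filtered} theory — the action-persistence module (barcode) attached to $H$ in the class $\alpha^p$, together with the local Floer homology of individual orbits.

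Next I would pin down the arithmetic of iterates in $\pi_1(\mathbb{T}^{2n})=\mathbb{Z}^{2n}$. Because $p$ is prime, a period-$p$ orbit in the class $\alpha^p$ that is not simple must be the $p$-fold iterate $y^p$ of a one-periodic orbit, and comparing classes forces $y\in\mathcal{P}_1(H;\alpha)$, of which there are only finitely many by hypothesis. By contrast, a period-$p'$ orbit in the same class $\alpha^p$ can never be an iterate once $p$ is large: a non-simple such orbit would be $z^{p'}$ with $z$ one-periodic in a class $\gamma$ satisfying $p'\gamma=p\alpha$, which is impossible for $\gcd(p,p')=1$ and $\alpha$ fixed and nonzero. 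Thus period-$p'$ orbits in $\alpha^p$ are \emph{automatically simple}, and it suffices either to produce any orbit of period $p'$ in $\alpha^p$, or to produce one \emph{extra} orbit of period $p$ in $\alpha^p$ beyond the finite list of iterates.

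The engine is the given non-degenerate orbit $x$. Its local Floer homology is one-dimensional, so $x^p$ represents a nonzero class in the period-$p$ Floer--Novikov complex in $\alpha^p$; since the total homology vanishes, this class is born-and-killed, i.e.\ it sits at an endpoint of a finite bar. I would then bring in the crossing-energy estimate of Ginzburg--Gürel: the energy of any Floer cylinder that enters and leaves a neighbourhood of the iterate $x^p$ is bounded below by a quantity growing with $p$. This pushes the generator that kills $x^p$ far away in action, and an index/action bookkeeping — based on the Conley--Zehnder iteration relation $|\mathrm{ind}(x^p)-p\Delta(x)|\le n$ for the mean index $\Delta(x)$, together with the fact that a prime $p$ avoids all nontrivial roots of unity among the finitely many Floquet multipliers, so the local homology of $x^p$ behaves regularly along primes — should show that the killing generator cannot be supplied by the finitely many iterates, yielding a new and therefore simple orbit of period $p$.

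The main obstacle is the resonant case $\Delta(x)=0$: then the indices of $x^p$ and of the competing iterates $y^p$ do not escape, and at period $p$ the bar of $x^p$ could in principle be matched by another iterate for every $p$, producing nothing new. This is exactly where the neighbouring prime $p'$ is decisive, and where I would lean on the automatic simplicity established above. The idea is to transport the nontrivial filtered information carried by $x^p$ from the period-$p$ picture to the period-$p'$ picture in the \emph{same} class $\alpha^p$, exploiting that $p'$ is the first prime after $p$ and hence close to $p$, so that actions and indices are comparable and a continuation between the two iterated Hamiltonians can be controlled; since in class $\alpha^p$ there are no non-simple orbits of period $p'$ at all, whatever carries the transported bar at period $p'$ is forced to be simple. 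Constructing and estimating this transport — so that the resonance which may block period $p$ cannot simultaneously empty the period-$p'$ complex — is the step I expect to demand the most care.
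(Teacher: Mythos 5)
Your reduction of the problem is sound: the arithmetic showing that a non-simple $p$-periodic orbit in $\alpha^p$ must be the $p$th iterate of an orbit in $\mathcal{P}_1(H;\alpha)$, and that every $p'$-periodic orbit in $\alpha^p$ is automatically simple, is exactly the paper's bookkeeping; and your ``transport'' idea for the resonant case is in fact the paper's actual mechanism (a pair of continuation maps $H^{\natural p}\to H^{\natural p'}\to H^{\natural p}$ between action windows shifted by $\delta=C(p'-p)$, whose composition is a quotient-inclusion isomorphism because $p'-p=o(p)$ by prime-gap bounds). But there is a genuine gap that undermines every filtered statement you make: you never control the \emph{toroidal} periods of $\omega_{\mathrm{std}}$ on the classes $\alpha^p$. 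Asphericity ($\pi_2(\mathbb{T}^{2n})=0$) only kills bubbling; the torus is emphatically \emph{not} atoroidal, so the action of an orbit in class $\alpha^p$ is well defined only modulo $\langle\overline{[\omega_{\mathrm{std}}]},\pi_1(\mathcal{L}_{\alpha^p}\mathbb{T}^{2n})\rangle$. Consequently your finite bars, your ``killing generator pushed far away in action,'' and your period-$p\to p'$ transport all presuppose that the spectrum of $\mathcal{A}_{H^{\natural p}}$ in class $\alpha^p$ near $0$ is just $p\,\mathrm{Spec}(H;\alpha)$, i.e.\ that a window of width comparable to $pa$ around $0$ contains no Novikov translates $p\mathcal{A}_H(\bar y)-\langle\overline{[\omega]},A\rangle$ of the finitely many iterates $y^p$. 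This holds only because of the scaling identity $\langle\overline{[\omega_{\mathrm{std}}]},\pi_1(\mathcal{L}_{\alpha^p}\mathbb{T}^{2n})\rangle = p\,\langle\overline{[\omega_{\mathrm{std}}]},\pi_1(\mathcal{L}_{\alpha}\mathbb{T}^{2n})\rangle = p h_{\alpha}\mathbb{Z}$, which is the paper's key lemma, proved by identifying elements of $\pi_1(\mathcal{L}_{\alpha}\mathbb{T}^{2n})$ with Pontryagin products in $H_2(\mathbb{T}^{2n};\mathbb{Z})$ and computing $2\times 2$ minors. That identity is precisely what fails to be automatic beyond G\"urel's atoroidal setting and what makes the torus case nontrivial; your proposal never states it, let alone proves it, so the window arguments are unfounded as written.

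A second, more local, problem is your ``engine'' for the non-resonant case: Ginzburg--G\"urel crossing-energy estimates require the orbit to be uniformly isolated as an invariant set, and lower bounds growing with $p$ are established for hyperbolic orbits; a merely non-degenerate orbit need not satisfy either hypothesis, and the index/action bookkeeping would in any case have to account for Novikov translates. Fortunately this entire branch, and the case split on the mean index $\Delta(x)=0$, is unnecessary: once the spectrum identity above is available, the paper runs a single argument covering all cases --- assume no simple $p$-periodic orbit in $\alpha^p$, normalize $\mathcal{A}_H(\bar x)=0$, observe that $0$ is then the only spectrum value in $[-pa,pa)$ so that $\mathrm{HFN}^{[-pa,pa)}(H^{\natural p};\alpha^p)$ contains $\mathrm{HF}^{\mathrm{loc}}(H^{\natural p},\bar x^p)\neq 0$ as a direct summand (large primes are admissible, so $x^p$ stays non-degenerate), and then the continuation triangle through $H^{\natural p'}$ forces $\mathrm{HFN}^{[-K+\delta,K+\delta)}(H^{\natural p'};\alpha^p)\neq 0$, producing a $p'$-periodic orbit in $\alpha^p$, which is simple by your own arithmetic observation. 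So the correct repair of your proposal is not to fix the crossing-energy step but to discard it and carry out, with the missing lemma, the transport you deferred.
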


In \cite[Theorem 1.1]{G}, B. G\"urel proved that a similar statement holds for closed symplectic manifolds equipped with atoroidal symplectic forms
(see also \cite[Theorem 2.4]{GG3} for a refined vesion of the theorem).
After that, in \cite[Theorem 2.2]{GG3}, V. Ginzburg and B. G\"urel proved the same result for closed toroidally monotone symplectic manifolds under a minor assumption on the ``Euler characteristic."

We note that the standard symplectic form $\omega_{\mathrm{std}}$ on $\mathbb{T}^{2n}$ is not atoroidal.
Moreover, $(\mathbb{T}^{2n},\omega_{\mathrm{std}})$ is not even toroidally monotone.
As pointed out by V. Ginzburg and B. G\"urel in \cite{GG3},
the 2-torus $\mathbb{T}^2$ was the only known example among $\mathbb{T}^{2n}$ ($2n\geq 2$)
that the presence of a non-contractible orbit yields the existence of infinitely many non-contractible orbits.

It is worth pointing out here that Theorem \ref{theorem:main} can be generalized as Theorem \ref{theorem:main2} in Section \ref{section:main}.
Further examples of symplectic manifolds to which Theorem \ref{theorem:main2} applies can be found in Example \ref{example} (see also Remark \ref{remark}).


\section{Preliminaries}\label{section:preliminaries}

In this section, we set conventions and notation and define the filtered Floer--Novikov homology for non-contractible orbits.

\subsection{Conventions and notation}

Let $(M,\omega)$ be a connected closed symplectic manifold.
Let $\mathcal{L}M=C^{\infty}(S^1,M)$ be the space of free loops in $M$ where $S^1=\mathbb{R}/\mathbb{Z}$.
For a free homotopy class $\alpha\in [S^1,M]$,
denote by $\mathcal{L}_{\alpha}M$ the component of $\mathcal{L}M$ with loops representing $\alpha$.

Since a class in $H_1(\mathcal{L}M;\mathbb{Z})$ can be regarded as a linear combination of maps $S^1\times S^1\to M$,
a cohomology class $w\in H^2(M;R)$ defines a cohomology class
\[
	\overline{w}\in H^1(\mathcal{L}M;R)=\mathrm{Hom}(H_1(\mathcal{L}M;\mathbb{Z}),R)
\]
by integrating a 2-form representing $w$ over the maps, where $R=\mathbb{R}$ or $\mathbb{Z}$.
Hence the symplectic form $\omega\in \Omega^2(M)$ and the first Chern class $c_1\in H^2(M;\mathbb{Z})$ of $(M,\omega)$ define cohomology classes
\[
	\overline{[\omega]}\in H^1(\mathcal{L}M;\mathbb{R})=\mathrm{Hom}(H_1(\mathcal{L}M;\mathbb{Z}),\mathbb{R})
\]
and
\[
	\overline{c_1}\in H^1(\mathcal{L}M;\mathbb{Z})=\mathrm{Hom}(H_1(\mathcal{L}M;\mathbb{Z}),\mathbb{Z}),
\]
respectively.

A cohomology class $w\in H^2(M;R)$ is called \textit{aspherical} if $w$ vanishes over $\pi_2(M)$.
Similarly, a cohomology class $w\in H^2(M;R)$ is called \textit{atoroidal} if the cohomology class $\overline{w}$ vanishes over $\pi_1(\mathcal{L}M)$.
We call a closed 2-form $\eta\in\Omega^2(M)$ \textit{atoroidal} if its cohomology class $[\eta]$ is atoroidal.
We note that every atoroidal cohomology class is aspherical.

In addition, a symplectic manifold $(M,\omega)$ is called \textit{toroidally monotone} (resp.\ \textit{toroidally negative monotone}) if we have
\[
	\langle \overline{[\omega]},\pi_1(\mathcal{L}M)\rangle=%
	\lambda\langle \overline{c_1},\pi_1(\mathcal{L}M)\rangle
\]
for some non-negative (resp.\ negative) number $\lambda\in\mathbb{R}$.
A symplectic form $\omega$ is called \textit{$\alpha$-toroidally rational} if the set
$\langle \overline{[\omega]},\pi_1(\mathcal{L}_{\alpha}M)\rangle$ is discrete in $\mathbb{R}$.
Namely, if $\omega$ is $\alpha$-toroidally rational, then there exists a number $h_{\alpha}\in\mathbb{R}$ such that
\[
	\langle \overline{[\omega]},\pi_1(\mathcal{L}_{\alpha}M)\rangle=%
	h_{\alpha}\mathbb{Z}.
\]
We note that every atoroidal symplectic form is $\alpha$-toroidally rational with $h_{\alpha}=0$ for any $\alpha\in [S^1,M]$.
Moreover, every toroidally monotone symplectic manifold has the $\alpha$-toroidally rational symplectic form
with $h_{\alpha}=\lambda c_{1,\alpha}^{\min}$ for any $\alpha\in [S^1,M]$,
where $c_{1,\alpha}^{\min}\in\mathbb{N}$ is the \textit{$\alpha$-minimal first Chern number} given by
\[
	\langle \overline{c_1},\pi_1(\mathcal{L}_{\alpha}M)\rangle=%
	c_{1,\alpha}^{\min}\mathbb{Z}.
\]

In the present paper, we always assume that all Hamiltonians $H$ are one-periodic in time, i.e., $H\colon S^1\times M\to\mathbb{R}$,
and we set $H_t=H(t,\cdot)$ for $t\in S^1=\mathbb{R}/\mathbb{Z}$.
The \textit{Hamiltonian vector field} $X_H\in \mathfrak{X}(M)$ associated to $H$ is defined by
\[
	\iota_{X_H}\omega=-dH.
\]
The \textit{Hamiltonian isotopy} $\{\varphi_H^t\}_{t\in [0,1]}$ associated to $H$ is defined by
\[
	\begin{cases}
		\varphi_H^0=\mathrm{id},\\
		\frac{d}{dt}\varphi_H^t=X_{H_t}\circ\varphi_H^t\quad \text{for all}\ t\in [0,1],
	\end{cases}
\]
and its time-one map $\varphi_H=\varphi_H^1$ is called the \textit{Hamiltonian diffeomorphism} generated by $H$.
Let $\mathcal{P}_k(H;\alpha)$ be the set of $k$-periodic (i.e., defined on $\mathbb{R}/k\mathbb{Z}$) orbits of $H$ representing $\alpha$.
A one-periodic orbit $x\in\mathcal{P}_1(H;\alpha)$ is called \textit{non-degenerate}
if it satisfies $\det\bigl(d\varphi_H(x(0))-\mathrm{id}\bigr)\neq 0$.

Let $K$ and $H$ be two one-periodic Hamiltonians. The composition $K\natural H$ is defined by
\[
	(K\natural H)_t=K_t+H_t\circ (\varphi_K^t)^{-1}.
\]
Then the flow of $K\natural H$ is given by $\varphi_K^t\circ\varphi_H^t$.
For $k\in\mathbb{N}$, we set $H^{\natural k}=H\natural\cdots\natural H$ ($k$ times).
By reparametrizing $H$, we can always assume that $H^{\natural k}$ is one-periodic.
We denote by $x^k$ the $k$th iteration of a one-periodic orbit $x$ of $H$, i.e., $x^k(t)=\varphi_{H^{\natural k}}^t\bigl(x(0)\bigr)$.


\subsection{Floer--Novikov homology}

In this subsection, we define the Floer--Novikov homology for non-contractible periodic orbits (see, e.g., \cite{BPS,BH} for details).

\subsubsection{Action functional}

Let $(M,\omega)$ be a connected closed symplectic manifold.
Let $H\colon S^1\times M\to\mathbb{R}$ be a time-dependent Hamiltonian.
For a free homotopy class $\alpha\in [S^1,M]$,
we fix a reference loop $z\in\alpha$.
We consider the set of pairs $(x,\Pi)$, where $x\in\mathcal{L}_{\alpha}M$
and $\Pi\colon [0,1]\times S^1\to M$ is a path in $\mathcal{L}_{\alpha}M$ between $z$ and $x$.
We set an equivalence relation $\sim$ by $(x,\Pi)\sim (x',\Pi')$ if and only if $x=x'$ and
\[
	\langle\overline{[\omega]},\Pi\#(-\Pi')\rangle=0\quad\text{and}\quad\langle\overline{c_1},\Pi\#(-\Pi')\rangle=0,
\]
where $\Pi\#(-\Pi')$ is a toroidal 2-cycle obtained by gluing $\Pi$ and $\Pi'$ with orientation reversed along the boundaries.
Then the space $\overline{\mathcal{L}_{\alpha}M}$ of such equivalence classes $[x,\Pi]$ is the covering space with structure group
\[
	\Gamma_{\alpha}=\frac{\pi_1(\mathcal{L}_{\alpha}M)}{\Ker\overline{[\omega]}\cap\Ker\overline{c_1}}.
\]
Denote by $\pi\colon\overline{\mathcal{L}_{\alpha}M}\to\mathcal{L}_{\alpha}M$ the covering projection.

We define the \textit{action functional} $\mathcal{A}_H\colon \overline{\mathcal{L}_{\alpha}M}\to \mathbb{R}$ by
\[
	\mathcal{A}_H([x,\Pi])=-\int_{[0,1]\times S^1} \Pi^{\ast}\omega +\int_{0}^{1}H_t\bigl(x(t)\bigr)\, dt.
\]
Since $\pi^{\ast}\overline{[\omega]}=0\in H^1(\overline{\mathcal{L}_{\alpha}M};\mathbb{R})$,
the action functional $\mathcal{A}_H$ is well-defined as a real-valued function.
Note that the critical point set $\Cr(\mathcal{A}_H)$ is equal to $\overline{\mathcal{P}}_1(H;\alpha)=\pi^{-1}\bigl(\mathcal{P}_1(H;\alpha)\bigr)$.
The action functional $\mathcal{A}_H$ is homogeneous with respect to iterations in the sense that
\[
	\mathcal{A}_{H^{\natural k}}([x,\Pi]^k)=k\mathcal{A}_H([x,\Pi]),
\]
where $[x,\Pi]^k=[x^k,\Pi^k]$ is the $k$th iteration of $[x,\Pi]$, and satisfies
\[
	\mathcal{A}_H([x,\Pi]\# A)=\mathcal{A}_H([x,\Pi])-\langle \overline{[\omega]},A\rangle,
\]
for all $A\in\pi_1(\mathcal{L}_{\alpha}M,z)$.
We define the \textit{action spectrum} of $\mathcal{A}_H$ by
\[
	\mathrm{Spec}(H;\alpha)=\mathcal{A}_H\bigl(\overline{\mathcal{P}}_1(H;\alpha)\bigr).
\]


\subsubsection{The filtered Floer--Novikov chain complex}

Let $a$ and $b$ be real numbers such that $-\infty\leq a<b\leq\infty$.
We suppose that the Hamiltonian $H$ satisfies $a,b\not\in\mathrm{Spec}(H;\alpha)$
and \textit{regular}, i.e., all one-periodic orbits $x\in\mathcal{P}_1(H;\alpha)$ are non-degenerate.

Let $J\in\mathcal{J}(M,\omega)$ be a smooth family of $\omega$-compatible almost complex structures.
Consider the Floer differential equation
\begin{equation}\label{eq:1}
	\partial_s u+J(u)\bigl(\partial_t u-X_{H_t}(u)\bigr)=0.
\end{equation}
For a smooth solution $u\colon \mathbb{R}\times S^1\to M$ to \eqref{eq:1}, we define the energy by the formula
\[
	E(u)=\int_0^1\int_{-\infty}^{\infty}\lvert\partial_s u\rvert^2\,dsdt.
\]
Then we have the following:

\begin{lemma}[\cite{S}]
Let $u\colon \mathbb{R}\times S^1\to M$ be a smooth solution to \eqref{eq:1} with finite energy.
\begin{enumerate}
	\item There exist $\bar{x}^{\pm}\in\overline{\mathcal{P}}_1(H;\alpha)$ such that
		\[
			\lim_{s\to\pm\infty}u(s,t)=x^{\pm}(t)\quad \text{and}\quad \lim_{s\to\pm\infty}\partial_s u(s,t)=0,
		\]
		where $\bar{x}^+=[x^+,\Pi^+]$ and $\bar{x}^-=[x^-,\Pi^-]$, and both limits are uniform in the $t$-variable.
		Moreover, we have
		\[
			\Pi^-\# u=\Pi^+.
		\]
	\item The energy identity holds:
		\[
			E(u)=\mathcal{A}_H(\bar{x}^-)-\mathcal{A}_H(\bar{x}^+).
		\]
\end{enumerate}
\end{lemma}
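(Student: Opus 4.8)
The plan is to establish the asymptotic statement (i) first, by the standard asymptotic analysis of finite-energy solutions to the perturbed Cauchy--Riemann equation \eqref{eq:1}, and then to read off the energy identity (ii) as a computation that relies on the convergence provided by (i). Throughout I would work with the Riemannian metric $g_J=\omega(\cdot,J\cdot)$ associated to the compatible $J$, so that $\lvert\partial_s u\rvert^2=g_J(\partial_s u,\partial_s u)$.

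For (i), I would first note that finiteness of $E(u)$ forces the tail energy $\int_{\{\lvert s\rvert\geq T\}\times S^1}\lvert\partial_s u\rvert^2\,ds\,dt$ to tend to $0$ as $T\to\infty$. The crucial analytic step is to upgrade this into a uniform gradient bound on the cylindrical ends: since $M$ is closed, an $\varepsilon$-regularity (mean-value) inequality for solutions of \eqref{eq:1} shows that wherever the local energy density is small the gradient $\lvert\nabla u\rvert$ is controlled, and the decay of the tail energy rules out energy concentration (bubbling) as $\lvert s\rvert\to\infty$. Elliptic bootstrapping for the (translation-invariant) Floer equation then promotes this to $C^\infty_{\mathrm{loc}}$ bounds, from which one extracts subsequential limits $u(s,\cdot)\to x^\pm$ in $C^\infty(S^1,M)$ as $s\to\pm\infty$; the limiting loops satisfy $\partial_t x^\pm=X_{H_t}(x^\pm)$, i.e.\ $x^\pm\in\mathcal{P}_1(H;\alpha)$, and $\partial_s u\to 0$. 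Finally, using that $H$ is regular, so that each $x^\pm$ is non-degenerate, one obtains exponential convergence at the ends, which both upgrades the subsequential limits to genuine uniform limits and guarantees the asymptotic cappings are well defined. The relation $\Pi^-\#u=\Pi^+$ is then bookkeeping in the covering $\overline{\mathcal{L}_\alpha M}$: gluing $\Pi^-$ to the restriction $u|_{(-\infty,s]\times S^1}$ produces a path in $\mathcal{L}_\alpha M$ from the reference loop $z$ to $u(s,\cdot)$, and letting $s\to\infty$ and comparing with the chosen lift $\bar x^+=[x^+,\Pi^+]$ yields the stated identity.

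For (ii), the cleanest route is to exploit that \eqref{eq:1} is the negative gradient flow of $\mathcal{A}_H$. Concretely, \eqref{eq:1} gives $\partial_t u-X_{H_t}(u)=J\,\partial_s u$, and combining $g_J=\omega(\cdot,J\cdot)$ with $\iota_{X_H}\omega=-dH$ one computes the pointwise identity
\[
	\lvert\partial_s u\rvert^2=u^{\ast}\omega(\partial_s,\partial_t)-\partial_s\bigl(H_t(u)\bigr).
\]
Integrating over $[s_1,s_2]\times S^1$ and letting $s_1\to-\infty$ and $s_2\to+\infty$, the first term converges to $\int_{\mathbb{R}\times S^1}u^{\ast}\omega$ and the second to $\int_0^1\bigl(H_t(x^-)-H_t(x^+)\bigr)\,dt$ by the convergence established in (i). Substituting $\Pi^+=\Pi^-\#u$ into the definition of $\mathcal{A}_H$ then gives $E(u)=\mathcal{A}_H(\bar x^-)-\mathcal{A}_H(\bar x^+)$.

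I expect the main obstacle to be the analytic heart of (i): ruling out bubbling at the ends and running the elliptic bootstrap to extract the limits uniformly in $t$. The capping identity and the energy identity (ii) are, by contrast, essentially formal once the convergence and the relation $\Pi^-\#u=\Pi^+$ are in hand.
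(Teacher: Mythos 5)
Your proposal is correct and takes essentially the same route as the source: the paper gives no proof of this lemma at all, quoting it from Salamon's lectures \cite{S}, and your sketch (tail-energy decay plus $\varepsilon$-regularity and bootstrapping to rule out concentration at the ends, non-degeneracy of the orbits to upgrade subsequential limits to exponential convergence, then the negative-gradient-flow computation for the energy identity) is precisely the standard argument of that reference. Your signs also match the paper's conventions $\iota_{X_H}\omega=-dH$ and $\mathcal{A}_H([x,\Pi])=-\int\Pi^{\ast}\omega+\int_0^1 H_t\bigl(x(t)\bigr)\,dt$, so the computation in (ii) is consistent.
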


We call a family of almost complex structures \textit{regular}
if the linearized operator for \eqref{eq:1} is surjective for any finite-energy solution of \eqref{eq:1}
in the homotopy class $\alpha$.
We denote by $\mathcal{J}_{\mathrm{reg}}(H;\alpha)$ the space of regular families of almost complex structures.
This subspace is generic in $\mathcal{J}(M,\omega)$ (see \cite{FHS}).
For any $J\in\mathcal{J}_{\mathrm{reg}}(H;\alpha)$ and any pair $\bar{x}^{\pm}\in\overline{\mathcal{P}}_1(H;\alpha)$,
the space
\[
	\mathcal{M}(\bar{x}^-,\bar{x}^+;H,J)=\{\,\text{solution of \eqref{eq:1} satisfying (i)}\,\}
\]
is a smooth manifold whose dimension near such a solution $u$ is given by
the difference of the Conley--Zehnder indices (see \cite{SZ}) of $\bar{x}^-$ and $\bar{x}^+$ relative to $u$.
We denote by $\mathcal{M}^1(\bar{x}^-,\bar{x}^+;H,J)$ the subspace of solutions of relative index one.
For $J\in\mathcal{J}_{\mathrm{reg}}(H;\alpha)$, the quotient $\mathcal{M}^1(\bar{x}^-,\bar{x}^+;H,J)/\mathbb{R}$
is a finite set for any pair $\bar{x}^{\pm}\in\overline{\mathcal{P}}_1(H;\alpha)$.

We set $\overline{\mathcal{P}}_1^a=\{\,\bar{x}\in\overline{\mathcal{P}}_1(H;\alpha)\mid \mathcal{A}_H(\bar{x})<a\,\}$.
We define the chain group of our Floer--Novikov chain complex to be
\[
	\mathrm{CFN}^{[a,b)}(H;\alpha)=\mathrm{CFN}^b(H;\alpha)/\mathrm{CFN}^a(H;\alpha),
\]
where
\[
	\mathrm{CFN}^a(H;\alpha)=\left\{\,\xi=\sum\xi_{\bar{x}}\bar{x}\relmiddle|%
	\begin{aligned}%
		\bar{x}\in\overline{\mathcal{P}}_1^a,\, \xi_{\bar{x}}\in\mathbb{Z}/2\mathbb{Z}\ \text{such that $\forall C\in\mathbb{R}$,}\\%
		\#\{\,\bar{x}\mid\xi_{\bar{x}}\neq 0,\, \mathcal{A}_H(\bar{x})>C\,\}<\infty%
	\end{aligned}\,\right\}.
\]

We define the boundary operator $\partial^{H,J}\colon \mathrm{CFN}^b(H;\alpha)\to \mathrm{CFN}^b(H;\alpha)$ by
\[
	\partial^{H,J}(\bar{x})=\sum \#\left(\mathcal{M}^1(\bar{x},\bar{y};H,J)/\mathbb{R}\right)\,\bar{y}
\]
for a generator $\bar{x}\in \overline{\mathcal{P}}_1^b$.

\begin{theorem}[\cite{F4}]
If $J$ is regular, then the operator $\partial^{H,J}$ is well-defined and satisfies $\partial^{H,J}\circ\partial^{H,J}=0$.
\end{theorem}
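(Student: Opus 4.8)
The plan is to establish the two assertions separately: first that $\partial^{H,J}$ lands in the chain group $\mathrm{CFN}^b(H;\alpha)$ (well-definedness), and then that it squares to zero. For well-definedness, fix a generator $\bar{x}\in\overline{\mathcal{P}}_1^b$. The coefficient of $\bar{y}$ in $\partial^{H,J}(\bar{x})$ is $\#\bigl(\mathcal{M}^1(\bar{x},\bar{y};H,J)/\mathbb{R}\bigr)$, which is finite by the regularity hypothesis together with the compactness statement recorded just above, so each individual term is well-defined. Every $\bar{y}$ occurring in $\partial^{H,J}(\bar{x})$ lies in $\overline{\mathcal{P}}_1^b$, since the energy identity gives a solution $u$ with $0\leq E(u)=\mathcal{A}_H(\bar{x})-\mathcal{A}_H(\bar{y})$, whence $\mathcal{A}_H(\bar{y})\leq\mathcal{A}_H(\bar{x})<b$. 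To verify the Novikov finiteness condition, I would use this same identity: given any $C\in\mathbb{R}$, a $\bar{y}$ with $\mathcal{A}_H(\bar{y})>C$ would be joined to $\bar{x}$ by a trajectory of energy bounded above by $\mathcal{A}_H(\bar{x})-C$, and Floer--Gromov compactness bounds the number of such $\bar{y}$, giving the required finiteness above every action level.

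For $\partial^{H,J}\circ\partial^{H,J}=0$, I would run the usual one-dimensional cobordism argument. The coefficient of $\bar{z}$ in $\partial^{H,J}\bigl(\partial^{H,J}(\bar{x})\bigr)$ counts, modulo two, the once-broken trajectories $(u_1,u_2)$ from $\bar{x}$ to $\bar{z}$ through an intermediate orbit $\bar{y}$ of Conley--Zehnder index one less than $\bar{x}$, with $u_1\in\mathcal{M}^1(\bar{x},\bar{y};H,J)/\mathbb{R}$ and $u_2\in\mathcal{M}^1(\bar{y},\bar{z};H,J)/\mathbb{R}$. I would compare this count with the boundary of the space $\mathcal{M}^2(\bar{x},\bar{z};H,J)/\mathbb{R}$, where $\bar{z}$ has relative index two. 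By regularity this quotient is a smooth one-manifold; Floer--Gromov compactness together with a gluing argument shows that it admits a compactification to a compact one-manifold whose boundary is precisely the set of such once-broken trajectories. Since the number of boundary points of a compact one-manifold is even, the mod-two count vanishes, yielding $\partial^{H,J}\circ\partial^{H,J}=0$.

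The two analytic inputs underlying both assertions are compactness and gluing. Compactness is especially clean in the present setting because $\pi_2(\mathbb{T}^{2n})=0$, so no holomorphic spheres can bubble off, and broken trajectories are the only possible degenerations of sequences of finite-energy solutions with uniformly bounded energy. Gluing supplies the converse, that each once-broken configuration arises as the limit of a unique end of the index-two moduli space. I expect the main obstacle to be the precise verification that the compactification of $\mathcal{M}^2(\bar{x},\bar{z};H,J)/\mathbb{R}$ is a manifold with boundary whose ends correspond bijectively to the once-broken trajectories --- that is, the transversality and gluing analysis of \cite{F4} --- rather than the formal counting, which is immediate once this structure is in place.
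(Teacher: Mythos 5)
The paper offers no proof of this statement to compare against: it is imported verbatim from Floer's work, cited as \cite{F4}, and the paper's role for it is purely as a black box in setting up the Floer--Novikov complex. Your sketch is precisely the standard proof of that cited result --- the energy identity plus Floer--Gromov compactness giving both the action decrease and the Novikov finiteness condition for well-definedness, and the compactification of the one-dimensional moduli space $\mathcal{M}^2(\bar{x},\bar{z};H,J)/\mathbb{R}$ by once-broken trajectories, via compactness and gluing, giving $\partial^{H,J}\circ\partial^{H,J}=0$ by the mod-two boundary count --- and it is correct in outline, with the hard analysis appropriately deferred to \cite{F4}. One caveat: the theorem as stated in the Preliminaries concerns an arbitrary connected closed symplectic manifold $(M,\omega)$, so your appeal to $\pi_2(\mathbb{T}^{2n})=0$ to exclude sphere bubbling covers only the paper's main application; in the stated generality one needs hypotheses under which bubbling can be controlled (asphericity, monotonicity, or semi-positivity as in Floer's setting and its extensions), a point the paper leaves implicit in the citation.
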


The energy identity (ii) implies that $\mathrm{CFN}^a(H;\alpha)$ is invariant under the boundary operator $\partial^{H,J}$.
Thus we get an induced operator $[\partial^{H,J}]$ on the quotient $\mathrm{CFN}^{[a,b)}(H;\alpha)$.

\begin{definition}
The \textit{filtered Floer--Novikov homology group} is defined to be
\[
	\mathrm{HFN}^{[a,b)}(H,J;\alpha)=\Ker{[\partial^{H,J}]}/\Image{[\partial^{H,J}]}.
\]
\end{definition}

\begin{theorem}[\cite{F4,S,SZ}]
If $J_0, J_1\in\mathcal{J}(H;\alpha)$ are two regular almost complex structures, then there exists a natural isomorphism
\[
	\mathrm{HFN}^{[a,b)}(H,J_0;\alpha)\to \mathrm{HFN}^{[a,b)}(H,J_1;\alpha).
\]
\end{theorem}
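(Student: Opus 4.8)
The plan is to run the standard continuation argument, taking care that every construction respects both the action filtration and the downward-finiteness condition built into the Novikov chain groups. First I would fix a smooth homotopy $\{J_s\}_{s\in\mathbb{R}}$ in $\mathcal{J}(M,\omega)$ with $J_s=J_0$ for $s\leq -1$ and $J_s=J_1$ for $s\geq 1$, chosen generically so that the parametrized moduli spaces appearing below are cut out transversally. For such a homotopy I would study the $s$-dependent analogue of \eqref{eq:1},
\[
	\partial_s u+J_s(u)\bigl(\partial_t u-X_{H_t}(u)\bigr)=0,
\]
and, for each pair $\bar{x}^{\pm}\in\overline{\mathcal{P}}_1(H;\alpha)$, form the moduli space $\mathcal{M}(\bar{x}^-,\bar{x}^+;H,\{J_s\})$ of its finite-energy solutions with the asymptotics of part (i) of the asymptotic lemma. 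Counting the relative-index-zero solutions modulo two (these are rigid, since the $s$-dependent equation has no translation invariance) defines, on generators, a map $\Phi\colon\mathrm{CFN}^b(H,J_0;\alpha)\to\mathrm{CFN}^b(H,J_1;\alpha)$. Since the Hamiltonian is the same at both ends, the $s$-dependence enters only through $J_s$ and contributes no curvature term, so the energy identity continues to read $E(u)=\mathcal{A}_H(\bar{x}^-)-\mathcal{A}_H(\bar{x}^+)\geq 0$; hence $\Phi$ does not increase the action and carries $\mathrm{CFN}^a(H,J_0;\alpha)$ into $\mathrm{CFN}^a(H,J_1;\alpha)$.

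Next I would check that $\Phi$ is a well-defined chain map. Gromov-type compactness for the continuation equation, applied in each action window and using that the action is monotone along continuation trajectories, guarantees that for every generator $\bar{x}$ and every $C\in\mathbb{R}$ only finitely many $\bar{y}$ with $\mathcal{A}_H(\bar{y})>C$ occur in $\Phi(\bar{x})$, so $\Phi$ extends to the Novikov completion while respecting the defining finiteness condition. Analyzing the ends of the one-dimensional parametrized moduli spaces $\mathcal{M}^1(\bar{x},\bar{y};H,\{J_s\})$, whose boundary points correspond to once-broken configurations consisting of a rigid continuation trajectory and a Floer trajectory of relative index one for $J_0$ or $J_1$, yields the identity $\partial^{H,J_1}\circ\Phi=\Phi\circ\partial^{H,J_0}$. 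Combined with the filtration estimate above, $\Phi$ descends to the quotient $\mathrm{CFN}^{[a,b)}$ and induces a homomorphism $\Phi_{\ast}\colon\mathrm{HFN}^{[a,b)}(H,J_0;\alpha)\to\mathrm{HFN}^{[a,b)}(H,J_1;\alpha)$.

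To show $\Phi_{\ast}$ is an isomorphism I would construct the continuation map $\Psi$ in the opposite direction from the reversed homotopy and prove $\Psi\circ\Phi\simeq\mathrm{id}$ and $\Phi\circ\Psi\simeq\mathrm{id}$ by the usual homotopy-of-homotopies: interpolating between the concatenated homotopy and the constant one produces a one-parameter family of equations whose relative-index $-1$ solutions assemble into a chain homotopy. The same two-parameter technique, applied to two different homotopies from $J_0$ to $J_1$, shows $\Phi_{\ast}$ is independent of the chosen path, giving the asserted naturality. The main obstacle is analytic rather than algebraic: one must ensure that all of these parametrized and doubly-parametrized moduli spaces are compact up to breaking and that the resulting chain homotopies again respect the action filtration and the Novikov finiteness condition. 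Both follow from the monotonicity $E(u)=\mathcal{A}_H(\bar{x}^-)-\mathcal{A}_H(\bar{x}^+)\geq 0$ together with Gromov compactness applied separately in each finite action window $[a,b)$, which confines breaking to configurations whose total action drop is bounded by $b-a$; this is the point at which the filtered, non-contractible setting must be handled with care.
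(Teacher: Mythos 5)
Your proof is correct and is essentially the standard continuation argument that the paper itself does not spell out but delegates to the cited references \cite{F4,S,SZ}. In particular, you correctly isolate the one point specific to the filtered Floer--Novikov setting: since only $J$ varies while $H$ is fixed, the energy identity $E(u)=\mathcal{A}_H(\bar{x}^-)-\mathcal{A}_H(\bar{x}^+)\geq 0$ has no curvature or shift term, so the continuation maps and the chain homotopies preserve the action window $[a,b)$ exactly (rather than up to a constant $C$ as in the paper's Hamiltonian continuation maps $\sigma_{H^+H^-}$) and also respect the Novikov finiteness condition, hence descend to the quotient complexes $\mathrm{CFN}^{[a,b)}$.
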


We refer to $\mathrm{HFN}^{[a,b)}(H;\alpha)=\mathrm{HFN}^{[a,b)}(H,J;\alpha)$ as the Floer--Novikov homology associated to $H$.


\subsubsection{Continuation}

We define the set
\[
	\mathcal{H}^{a,b}(M;\alpha)=\{\,H\colon S^1\times M\to\mathbb{R}\mid a,b\not\in\mathrm{Spec}(H;\alpha)\,\}.
\]

\begin{proposition}[{\cite[Remark 4.4.1]{BPS}}]\label{proposition:nbd}
Every Hamiltonian $H\in\mathcal{H}^{a,b}(M;\alpha)$ has a neighborhood $\mathcal{U}$
such that the Floer--Novikov homology groups $\mathrm{HFN}^{[a,b)}(H',J';\alpha)$, for any regular $H'\in \mathcal{U}$
and any regular almost complex structure $J'\in\mathcal{J}_{\mathrm{reg}}(H';\alpha)$, are naturally isomorphic.
\end{proposition}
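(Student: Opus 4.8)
The plan is to prove this by the standard continuation argument for Floer--Novikov homology, with the action window controlled through the energy identity; the only ingredients beyond the usual construction are the choice of the neighborhood $\mathcal{U}$ and the verification that the window $[a,b)$ is preserved.

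First I would establish the key topological input: the action spectrum is upper semicontinuous in $H$. Concretely, if $H_n\to H$ in $C^\infty$ and $c_n\in\mathrm{Spec}(H_n;\alpha)$ with $c_n\to c$, then $c\in\mathrm{Spec}(H;\alpha)$. This follows from a compactness argument: choosing orbits $x_n\in\mathcal{P}_1(H_n;\alpha)$ whose lifts realize the action $c_n$, the uniform $C^1$-bounds on the vector fields $X_{H_n}$ give, via the Arzel\`a--Ascoli theorem, a subsequence converging in $C^\infty$ to an orbit $x$ of $H$; the class $\alpha$ is preserved under $C^0$-small perturbation of loops, and the action is continuous in $(H,x,\Pi)$, so $c\in\mathrm{Spec}(H;\alpha)$. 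Since $H\in\mathcal{H}^{a,b}(M;\alpha)$ and $\mathrm{Spec}(H;\alpha)$ is closed, I may fix $\delta>0$ with $[a-2\delta,a+2\delta]$ and $[b-2\delta,b+2\delta]$ disjoint from $\mathrm{Spec}(H;\alpha)$; the upper semicontinuity then furnishes a neighborhood $\mathcal{U}_0$ of $H$ with $a,b\notin\mathrm{Spec}(H';\alpha)$ for all $H'\in\mathcal{U}_0$, i.e.\ $\mathcal{U}_0\subset\mathcal{H}^{a,b}(M;\alpha)$.

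Next I would shrink $\mathcal{U}_0$ to a convex ball $\mathcal{U}=\{H':\lVert H'-H\rVert_{C^2}<\delta'\}$ so small that $\max_M\lvert H'-H''\rvert<\delta$ for all $H',H''\in\mathcal{U}$. Given regular Hamiltonians $H_0,H_1\in\mathcal{U}$ with regular data $J_0,J_1$, I connect them by the linear homotopy $H_s=(1-s)H_0+sH_1$, which stays in the convex set $\mathcal{U}\subset\mathcal{H}^{a,b}$, together with a generic path $J_s$, and count index-zero solutions of the $s$-dependent version of \eqref{eq:1} to define the continuation map $\Phi\colon\mathrm{CFN}^b(H_0;\alpha)\to\mathrm{CFN}^b(H_1;\alpha)$; compactness makes this a finite count and a chain map in the usual way. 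The energy identity for the $s$-dependent equation shows that a continuation trajectory from $\bar{x}^-$ to $\bar{x}^+$ satisfies $\mathcal{A}_{H_1}(\bar{x}^+)\leq\mathcal{A}_{H_0}(\bar{x}^-)+\max_M(H_1-H_0)<\mathcal{A}_{H_0}(\bar{x}^-)+\delta$, so $\Phi$ shifts action by less than $\delta$. Since no point of any $\mathrm{Spec}(H_s;\alpha)$ lies in the collars $(a-\delta,a+\delta)$ or $(b-\delta,b+\delta)$, this shift cannot move an orbit across the levels $a$ or $b$, and $\Phi$ descends to the window to give $[\Phi]\colon\mathrm{HFN}^{[a,b)}(H_0;\alpha)\to\mathrm{HFN}^{[a,b)}(H_1;\alpha)$.

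Finally, running the construction for the reversed homotopy yields $[\Psi]$ in the opposite direction, and the standard homotopy-of-homotopies argument---concatenating the two homotopies and using that the composite action shift stays within the spectrum-free collars---shows $[\Psi]\circ[\Phi]$ and $[\Phi]\circ[\Psi]$ are the identity on the respective filtered homologies, so $[\Phi]$ is the asserted natural isomorphism; independence of the auxiliary choices follows from the same argument applied to two such homotopies. I expect the main obstacle to be the filtration bookkeeping in this last step: one must choose $\delta$ small relative to the spectral gap and exploit the upper semicontinuity of the spectrum along the entire family $\{H_s\}$ to guarantee that none of the continuation or homotopy maps pushes a generator across $a$ or $b$, which is exactly what makes the maps on the quotient complexes well-defined and mutually inverse.
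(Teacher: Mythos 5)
The paper never proves Proposition~\ref{proposition:nbd} --- it is quoted from \cite[Remark 4.4.1]{BPS} --- and your continuation-map scheme is the standard route to statements of this kind, so the comparison is about correctness rather than approach. The genuine gap is in your very first step, on which everything downstream rests: the assertion that $\mathrm{Spec}(H;\alpha)$ is closed, so that spectrum-free collars $[a-2\delta,a+2\delta]$ and $[b-2\delta,b+2\delta]$ exist. In the Floer--Novikov setting this can fail. Since $\mathcal{A}_H([x,\Pi]\#A)=\mathcal{A}_H([x,\Pi])-\langle \overline{[\omega]},A\rangle$ and the fiber of $\overline{\mathcal{L}_{\alpha}M}\to\mathcal{L}_{\alpha}M$ over an orbit is its orbit under the deck group, the spectrum is a union of cosets of the subgroup $\Lambda_{\alpha}=\langle\overline{[\omega]},\pi_1(\mathcal{L}_{\alpha}M)\rangle\subset\mathbb{R}$. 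A subgroup of $\mathbb{R}$ is either discrete or dense; if $\Lambda_{\alpha}$ is dense (this happens, e.g., on $\mathbb{T}^4$ with $\omega=dp_1\wedge dq_1+\lambda\,dp_2\wedge dq_2$, $\lambda$ irrational, and $\alpha$ nontrivial in both $\mathbb{T}^2$-factors --- exactly the case excluded by the rank-one condition in Remark~\ref{remark}(ii)) and $\mathcal{P}_1(H;\alpha)\neq\emptyset$, then $\mathrm{Spec}(H;\alpha)$ is dense: points of the spectrum accumulate at $a$ and $b$, and no collar of any size avoids it. Your upper-semicontinuity argument breaks at the same place and for the same reason: Arzel\`a--Ascoli controls the orbits $x_n$ but not the cappings $\Pi_n$, whose classes may run off through $\pi_1(\mathcal{L}_{\alpha}M)$; writing $c_n=\mathcal{A}_{H_n}([x_n,\Pi_n'])+g_n$ with a continuously chosen lift $\Pi_n'$ and $g_n\in\Lambda_{\alpha}$, a dense $\Lambda_{\alpha}$ allows $c_n$ to converge to values outside $\mathrm{Spec}(H;\alpha)$. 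Continuity of the action ``in $(H,x,\Pi)$'' does not help, because the $\Pi_n$ need not converge.

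Your proof is correct precisely when $\Lambda_{\alpha}$ is discrete, i.e.\ when $\omega$ is $\alpha$-toroidally rational in the paper's terminology: then $\mathrm{Spec}(H;\alpha)$ is (compact set of base actions)$\,+\,h_{\alpha}\mathbb{Z}$, hence closed; discreteness forces the group elements $g_n$ above to stabilize, which rescues semicontinuity; and the collar bookkeeping, the action-shift estimate (which should read $\int_0^1\max_M(H_{1,t}-H_{0,t})\,dt$ rather than $\max_M(H_1-H_0)$, matching the paper's constant $C$), and the homotopy-of-homotopies step are all standard and fine. This is also the only regime in which the paper actually invokes the proposition: Theorem~\ref{theorem:main2} assumes $\alpha$-toroidal rationality, and for $(\mathbb{T}^{2n},\omega_{\mathrm{std}})$ Lemma~\ref{lemma:key} gives $\Lambda_{\alpha}=h_{\alpha}\mathbb{Z}$. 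So you should either build the discreteness of $\Lambda_{\alpha}$ into your hypotheses, acknowledging that the proposition in its stated generality is not reached by your argument, or supply a genuinely different mechanism for the dense case; the collar-based method cannot be repaired there, since the collars simply do not exist.
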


Proposition \ref{proposition:nbd} enables us to define the Floer--Novikov homology $\mathrm{HFN}^{[a,b)}(H;\alpha)$
whether $H$ is regular or not.

\begin{definition}\label{definition:degenerate}
For $H\in\mathcal{H}^{a,b}(M;\alpha)$, we define $\mathrm{HFN}^{[a,b)}(H;\alpha)=\mathrm{HFN}^{[a,b)}(\widetilde{H};\alpha)$,
where $\widetilde{H}$ is any regular Hamiltonian sufficiently close to $H$.
\end{definition}

Let $H^+$, $H^-\colon S^1\times M\to\mathbb{R}$ be two Hamiltonians.
We choose regular almost complex structures $J^{\pm}\in\mathcal{J}_{\mathrm{reg}}(H^{\pm};\alpha)$.
We consider a \textit{linear homotopy} $\{H_s\}_{s\in\mathbb{R}}$ from $H^-$ to $H^+$, i.e.,
a smooth homotopy of the form
\[
	(H_s)_t=H_t^-+\beta(s)(H_t^+-H_t^-),
\]
where $\beta\colon\mathbb{R}\to [0,1]$ is a non-decreasing function,
and choose a smooth homotopy $\{J_s\}_{s\in\mathbb{R}}$ from $J^-$ to $J^+$ such that
\[
	(H_s,J_s)=%
	\begin{cases}%
		(H^-,J^-) & \text{if\: $s\ll -1$},\\
		(H^+,J^+) & \text{if\: $s\gg 1$}.
	\end{cases}
\]
We set $H_{s,t}=(H_s)_t$.
Let $\alpha\in [S^1,M]$ be a nontrivial free homotopy class and $a,b\in\mathbb{R}\cup\{\infty\}$ such that $a<b$.
It follows from the energy identity
\[
	E(u)=\mathcal{A}_{H^-}(\bar{x}^-)-\mathcal{A}_{H^+}(\bar{x}^+)+\int_0^1\int_{-\infty}^{\infty}\partial_s H\bigl(s,t,u(s,t)\bigr)\,dsdt
\]
that the Floer--Novikov chain map $\mathrm{CFN}(H^-;\alpha)\to\mathrm{CFN}(H^+;\alpha)$,
defined in terms of the solutions of the equation
\[
	\partial_s u+J_s(u)\bigl(\partial_t u-X_{H_{s,t}}(u)\bigr)=0,
\]
induces a natural homomorphism
\[
	\sigma_{H^+H^-}\colon \mathrm{HFN}^{[a,b)}(H^-;\alpha)\to \mathrm{HFN}^{[a+C,b+C)}(H^+;\alpha),
\]
where $C=C(\{H_s\}_s)$ is a constant given by
\[
	C=\max\left\{\int_0^1\max_{M}\left(H_t^+-H_t^-\right)dt,0\right\}.
\]


\section{Main result}\label{section:main}

In this section, we state a generalized version (Theorem \ref{theorem:main2}) of Theorem \ref{theorem:main} and prove the theorems.

\subsection{Main theorem}

Let $(M,\omega)$ be a closed symplectic manifold.
We recall that an isolated periodic orbit $x$ of $H$ is said to be \textit{homologically non-trivial}
if for some lift $\bar{x}$ of $x$, the local Floer homology $\mathrm{HF}^{\mathrm{loc}}(H,\bar{x})$ of $H$ at $\bar{x}$ is non-zero (see \cite {GG1} for details).
Clearly, a non-degenerate fixed point is homologically non-trivial.

The following theorem is the generalized version of Theorem \ref{theorem:main}.

\begin{theorem}\label{theorem:main2}
Let $H\colon S^1\times M\to\mathbb{R}$ be a Hamiltonian
having an isolated and homologically non-trivial one-periodic orbit $x$ with homotopy class $\alpha$
such that $[\alpha]\neq 0$ in $H_1(M;\mathbb{Z})/\mathrm{Tor}$, and $\mathcal{P}_1(H;[\alpha])$ is finite.
Assume that $\omega$ is $\alpha$-toroidally rational and
\[
	\langle \overline{[\omega]},\pi_1(\mathcal{L}_{\alpha^p}M)\rangle=%
	p\langle \overline{[\omega]},\pi_1(\mathcal{L}_{\alpha}M)\rangle
\]
holds for every sufficiently large prime $p$.
Then for every sufficiently large prime $p$, the Hamiltonian $H$ has a simple periodic orbit in the homotopy class $\alpha^p$
and with period either $p$ or $p'$,
where $p'$ is the first prime greater than $p$.
\end{theorem}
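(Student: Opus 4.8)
The plan is to argue by contradiction. Fix a large prime $p$ with successor prime $p'$, and suppose that $H$ possesses no simple periodic orbit in the class $\alpha^p$ of period $p$, and likewise none in $\alpha^{p'}$ of period $p'$. Since $p$ is prime, a $p$-periodic orbit that is not simple has minimal period $1$, hence is the $p$-th iterate $z^p$ of a one-periodic orbit $z$; as $[\alpha^p]=p[\alpha]\neq 0$ in $H_1(M;\mathbb{Z})/\mathrm{Tor}$, such a $z$ must represent $\alpha$. Together with the finiteness of $\mathcal{P}_1(H;\alpha)$ this shows that, under the contradiction hypothesis, the set of $p$-periodic orbits in class $\alpha^p$ is finite and consists exactly of the iterates $\{z^p : z\in\mathcal{P}_1(H;\alpha)\}$. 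In particular every such orbit is isolated, so that the filtered groups $\mathrm{HFN}^{[a,b)}(H^{\natural p};\alpha^p)$ and the local Floer homologies $\mathrm{HF}^{\mathrm{loc}}(H^{\natural p},\bar z^p)$ are all defined.

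First I would show that the distinguished orbit persists under iteration. Using the iteration theory of local Floer homology (see \cite{GG1}), the hypothesis that $x$ is homologically non-trivial yields that for all sufficiently large primes $p$ the iterate $x^p$ is again homologically non-trivial, with $\mathrm{HF}^{\mathrm{loc}}(H^{\natural p},\bar x^p)\neq 0$ supported in degrees close to the mean index $p\hat{\Delta}(x)$. In the non-degenerate situation of Theorem~\ref{theorem:main} this step is elementary: since $x$ is non-degenerate, $1$ is not an eigenvalue of $d\varphi_H(x(0))$, so for every prime $p$ larger than the order of each root-of-unity eigenvalue the matrix $d\varphi_{H^{\natural p}}(x(0))$ again omits the eigenvalue $1$; thus $x^p$ is non-degenerate and contributes a single generator in degree $\mu(x^p)$.

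Next I would exploit the vanishing of the total homology. Because $\alpha^p$ is non-trivial, a $C^2$-small autonomous Hamiltonian has only its (contractible) constant orbits in this class, so by homotopy invariance $\mathrm{HFN}(H^{\natural p};\alpha^p)=0$. The non-vanishing local homology of $x^p$ must therefore be cancelled in the total complex by a Floer connecting trajectory joining $x^p$ to some other orbit $y$. The core of the proof is to show that no iterate can play this role. By the scaling relations $\mathcal{A}_{H^{\natural p}}(\bar z^p)=p\,\mathcal{A}_H(\bar z)$ and $\mu(z^p)=p\hat{\Delta}(z)+O(1)$, and using that $\omega$ is $\alpha$-toroidally rational together with the hypothesis $h_{\alpha^p}=p\,h_\alpha$, the actions and Conley--Zehnder indices of the iterates separate linearly in $p$. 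For $p$ large, the bounded degree window of $\mathrm{HF}^{\mathrm{loc}}(H^{\natural p},\bar x^p)$ then forces any iterated partner $z^p$ to satisfy $\hat{\Delta}(z)=\hat{\Delta}(x)$, and the discreteness of $\mathrm{Spec}(H;\alpha)$ forces $\mathcal{A}_H(\bar z)\equiv\mathcal{A}_H(\bar x)$ modulo $h_\alpha$; a comparison of action levels among the finitely many orbits of mean index $\hat{\Delta}(x)$ then shows the cancelling partner cannot be an iterate, so that $y$ is a genuinely new, hence simple, $p$-periodic orbit in class $\alpha^p$.

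I expect the main obstacle to be the action--index bookkeeping in the presence of Novikov coefficients: since $\omega_{\mathrm{std}}$ is not atoroidal, actions are only defined modulo $h_\alpha\mathbb{Z}$, and a priori the newly detected orbit could be an iterate whose action is resonant with the predicted level. This is exactly the point at which both primes are needed. The action of any iterate lies in $p\,\mathrm{Spec}(H;\alpha)$ modulo $p\,h_\alpha\mathbb{Z}$; imposing this constraint simultaneously for the two coprime periods $p$ and $p'$ (together with the corresponding normalizations $h_{\alpha^p}=p\,h_\alpha$ and $h_{\alpha^{p'}}=p'\,h_\alpha$) cannot be met for both, so for at least one of $p$, $p'$ the orbit produced is not an iterate, i.e.\ is simple. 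Verifying this resonance exclusion, and controlling the index remainders $O(1)$ uniformly in $p$, is the technically delicate part; the remainder of the argument is a fairly standard application of the filtered Floer--Novikov package assembled in Section~\ref{section:preliminaries}.
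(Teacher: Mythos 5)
Your central detection mechanism has a genuine gap, and it is exactly the issue this theorem was designed to get around. You propose to use vanishing of the total homology, $\mathrm{HFN}(H^{\natural p};\alpha^p)=0$, to force the nonzero class coming from $\mathrm{HF}^{\mathrm{loc}}(H^{\natural p},\bar{x}^p)$ to be cancelled by some other orbit, and then to exclude iterates as cancelling partners by action--index bookkeeping. But in the Floer--Novikov complex the generators are not the orbits $z^p$; they are all lifts $\bar{z}^p\#A$ with $A\in\pi_1(\mathcal{L}_{\alpha^p}M)$. On the motivating example $(\mathbb{T}^{2n},\omega_{\mathrm{std}})$ one has $c_1=0$ and $h_{\alpha^p}=p\,h_{\alpha}\neq 0$, so each orbit carries a $\mathbb{Z}$-family of lifts with \emph{identical} Conley--Zehnder indices and actions filling $p\,\mathcal{A}_H(\bar{z})+p h_{\alpha}\mathbb{Z}$. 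In particular a recapped lift $\bar{x}^p\#A$ of the distinguished orbit itself has the same index window as $\bar{x}^p$ and action $\pm p h_{\alpha}$, and is therefore a perfectly admissible cancelling partner; no comparison of mean indices or of action levels among the finitely many elements of $\mathcal{P}_1(H;\alpha)$ can rule it out. Your proposed rescue --- imposing the resonance constraint ``simultaneously'' for the coprime periods $p$ and $p'$ --- is not a proof: your two contradiction hypotheses concern different Hamiltonians ($H^{\natural p}$ and $H^{\natural p'}$) and different loop-space components ($\alpha^p$ and $\alpha^{p'}$), so there is no single orbit that is required to satisfy both constraints, and no contradiction results. Note also that what you set out to prove is not the stated theorem: you negate ``simple orbit in $\alpha^p$ of period $p$, or in $\alpha^{p'}$ of period $p'$,'' whereas the theorem asserts a simple orbit in the \emph{single} class $\alpha^p$ with period $p$ or $p'$.

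The paper's proof never invokes vanishing of total homology, precisely to avoid this recapping problem. After establishing the spectrum scaling $\mathrm{Spec}(H^{\natural p};\alpha^p)=p\,\mathrm{Spec}(H;\alpha)$ (via the hypothesis $\langle\overline{[\omega]},\pi_1(\mathcal{L}_{\alpha^p}M)\rangle=p\langle\overline{[\omega]},\pi_1(\mathcal{L}_{\alpha}M)\rangle$, a point you do identify correctly) and normalizing $\mathcal{A}_H(\bar{x})=0$, it chooses $a>0$ with $[-a,a)\cap\mathrm{Spec}(H;\alpha)=\{0\}$, so that $\mathrm{HFN}^{[-pa,pa)}(H^{\natural p};\alpha^p)$ is a direct sum of local Floer homologies and hence nonzero. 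The new orbit is then detected by the commutative triangle of continuation maps
\[
	\mathrm{HFN}^{[-K,K)}(H^{\natural p};\alpha^p)\to\mathrm{HFN}^{[-K+\delta,K+\delta)}(H^{\natural p'};\alpha^p)\to\mathrm{HFN}^{[-K+2\delta,K+2\delta)}(H^{\natural p};\alpha^p),
\]
where \emph{both} iterated Hamiltonians are considered in the \emph{same} class $\alpha^p$, the shift $\delta=C(p'-p)$ is $o(p)$ by prime-gap bounds and hence negligible compared with the spectral gap $pa$, and the composition is the quotient-inclusion isomorphism. Nonvanishing of the middle group yields a $p'$-periodic orbit of $H$ in the class $\alpha^p$, and its simplicity is automatic on divisibility grounds: were it a $p'$-th iterate, one would need $p[\alpha]\in p'\bigl(H_1(M;\mathbb{Z})/\mathrm{Tor}\bigr)$, impossible for large $p$ since $[\alpha]\neq 0$. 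This period/class mismatch (period $p'$, class $\alpha^p$) is the key idea missing from your outline; it converts simplicity into a trivial arithmetic statement and eliminates the delicate, and in fact irreparable, cancellation analysis.
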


\begin{example}\label{example}
(i)\: One of the most important examples of symplectic manifolds meeting the requirements of Theorem \ref{theorem:main2} is the torus $(\mathbb{T}^{2n},\omega_{\mathrm{std}})$ (see Lemma \ref{lemma:key} for details).

(ii)\: It is clear that we can apply Theorem \ref{theorem:main2} for every closed symplectic manifold $N$ with an atoroidal symplectic form $\omega_N$ since we have
$\langle \overline{[\omega_N]},\pi_1(\mathcal{L}_{\alpha}N)\rangle=0$ for all $\alpha\in [S^1,N]$.
Moreover, the product $(M\times N,\omega\oplus\omega_N)$ is also an example
where $(M,\omega)$ is a closed symplectic manifold satisfying the assumptions in Theorem \ref{theorem:main2} and $\omega_N$ is atoroidal.
Indeed, we have
\begin{align*}
	\left\langle \overline{[\omega\oplus\omega_N]},\pi_1\left(\mathcal{L}_{(\alpha,\alpha_N)}(M\times N)\right)\right\rangle%
	&=\langle \overline{[\omega]},\pi_1(\mathcal{L}_{\alpha}M)\rangle%
	+\langle \overline{[\omega_N]},\pi_1(\mathcal{L}_{\alpha_N}N)\rangle\\
	&=\langle \overline{[\omega]},\pi_1(\mathcal{L}_{\alpha}M)\rangle.
\end{align*}
\end{example}


\subsection{Proof of Theorem \ref{theorem:main}}

In this subsection, we prove Theorem \ref{theorem:main} by assuming that Theorem \ref{theorem:main2} holds.
We call an element $e\in\mathbb{Z}^m$ \textit{primitive}
if $e\neq kf$ for any positive integer $k\in\mathbb{N}$ and any $f\in\mathbb{Z}^m\setminus\{e\}$.

Now we consider the torus $\mathbb{T}^{2n}=(\mathbb{R}/\mathbb{Z})^{2n}$ with the standard symplectic form $\omega_{\mathrm{std}}$.
Let $\alpha\in [S^1,\mathbb{T}^{2n}]\cong\mathbb{Z}^{2n}$ be a non-trivial homotopy class.
Then there exist a positive integer $h_{\alpha}\in\mathbb{N}$ and a primitive vector $e_{\alpha}\in\mathbb{Z}^{2n}$ such that $\alpha=h_{\alpha}e_{\alpha}$.
For $\alpha=0$, we set $h_{\alpha}=0$ and $e_{\alpha}=0$.
Then the following lemma is crucial.

\begin{lemma}\label{lemma:key}
We have
\begin{align*}
	\left\langle \overline{[\omega_{\mathrm{std}}]},\pi_1\left(\mathcal{L}_{\alpha}\mathbb{T}^{2n}\right)\right\rangle%
	&=h_{\alpha}\left\{\,\sum_{i=1}^n%
	\begin{vmatrix}
		e_{2i-1} &b_{2i-1} \\
		e_{2i} &b_{2i}
	\end{vmatrix}
	\relmiddle|(b_1,\ldots,b_{2n})\in\mathbb{Z}^{2n}\,\right\}\\
	&=%
	h_{\alpha}\mathbb{Z},
\end{align*}
where $e_{\alpha}=(e_1,\ldots,e_{2n})\in\mathbb{Z}^{2n}$.

In particular, we have
\[
	\left\langle \overline{[\omega_{\mathrm{std}}]},\pi_1\left(\mathcal{L}_{\alpha^k}\mathbb{T}^{2n}\right)\right\rangle%
	=k\left\langle \overline{[\omega_{\mathrm{std}}]},\pi_1\left(\mathcal{L}_{\alpha}\mathbb{T}^{2n}\right)\right\rangle
\]
for all $\alpha\in [S^1,\mathbb{T}^{2n}]$ and $k\in\mathbb{N}$.
\end{lemma}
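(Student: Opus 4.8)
The plan is to compute $\langle\overline{[\omega_{\mathrm{std}}]},\pi_1(\mathcal{L}_\alpha\mathbb{T}^{2n})\rangle$ explicitly by understanding which toroidal $2$-cycles arise from loops in $\mathcal{L}_\alpha\mathbb{T}^{2n}$, and then evaluating $\omega_{\mathrm{std}}$ on them. A loop in $\mathcal{L}_\alpha\mathbb{T}^{2n}$ based at a fixed representative $z$ of $\alpha$ is a map $S^1\times S^1\to\mathbb{T}^{2n}$, i.e.\ a torus in $\mathbb{T}^{2n}$. First I would recall that since $\mathbb{T}^{2n}=(\mathbb{R}/\mathbb{Z})^{2n}$ is a $K(\mathbb{Z}^{2n},1)$, homotopy classes of such maps $T^2\to\mathbb{T}^{2n}$ are determined by the induced map on $\pi_1$, namely by a homomorphism $\mathbb{Z}^2\to\mathbb{Z}^{2n}$. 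Writing $S^1\times S^1$ with coordinates $(s,t)$, the $t$-circle must map to the class $\alpha=h_\alpha e_\alpha$ (this is what ``representing $\alpha$'' pins down), while the $s$-circle may map to an arbitrary class $b=(b_1,\dots,b_{2n})\in\mathbb{Z}^{2n}=\pi_1(\mathbb{T}^{2n})$. Thus $\pi_1(\mathcal{L}_\alpha\mathbb{T}^{2n})$ is parametrized (up to the kernel already quotiented out) by $b\in\mathbb{Z}^{2n}$, and I would take the standard linear model of such a torus, $\Pi_b(s,t)=sb+t\alpha+(\text{const})$ modulo $\mathbb{Z}^{2n}$.

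Next I would evaluate $\langle\overline{[\omega_{\mathrm{std}}]},\Pi_b\rangle=\int_{T^2}\Pi_b^*\omega_{\mathrm{std}}$. With $\omega_{\mathrm{std}}=\sum_{i=1}^n dx_{2i-1}\wedge dx_{2i}$ and the linear map above, the pullback $\Pi_b^*(dx_{2i-1}\wedge dx_{2i})$ is a constant $2$-form on $T^2$ whose coefficient is the $2\times2$ determinant
\[
	\begin{vmatrix}
		e_{2i-1} & b_{2i-1}\\
		e_{2i} & b_{2i}
	\end{vmatrix},
\]
scaled by $h_\alpha$ coming from $\alpha=h_\alpha e_\alpha$; integrating the constant form over the unit-area torus gives exactly that number. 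Summing over $i$ yields the first displayed expression. This is the computational heart of the lemma, but it is routine linear algebra once the parametrization is fixed.

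The one genuinely substantive point—and the step I expect to be the main obstacle—is the second equality, that the set of all such sums equals $h_\alpha\mathbb{Z}$ rather than some proper sublattice of $\mathbb{Z}$. For this I would use that $e_\alpha=(e_1,\dots,e_{2n})$ is \emph{primitive}: the $\gcd$ of the $2\times2$ minors $e_{2i-1}b_{2i}-e_{2i}b_{2i-1}$ as $b$ ranges over $\mathbb{Z}^{2n}$ must be $1$. Concretely, since $e_\alpha$ is primitive, some coordinate $e_j$ equals $\pm1$ after reduction, or more robustly the entries $\{e_1,\dots,e_{2n}\}$ have $\gcd=1$; choosing $b$ to be a suitable standard basis vector produces a single minor equal to one of the $e_j$, and by Bézout an integer combination of these minors realizes $\gcd(e_1,\dots,e_{2n})=1$. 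Hence the value set is all of $\mathbb{Z}$, so the whole expression is $h_\alpha\mathbb{Z}$. Finally, for the ``in particular'' claim, I would note $\alpha^k$ corresponds to $k\alpha=(kh_\alpha)e_\alpha$, so its associated integer $h_{\alpha^k}=kh_\alpha$ while the primitive vector $e_{\alpha^k}=e_\alpha$ is unchanged; substituting into the formula just multiplies by $k$, giving $\langle\overline{[\omega_{\mathrm{std}}]},\pi_1(\mathcal{L}_{\alpha^k}\mathbb{T}^{2n})\rangle=kh_\alpha\mathbb{Z}=k\langle\overline{[\omega_{\mathrm{std}}]},\pi_1(\mathcal{L}_\alpha\mathbb{T}^{2n})\rangle$, with the $\alpha=0$ case trivial.
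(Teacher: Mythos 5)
Your proof is correct, and it reaches the paper's formula by a somewhat different route. The paper works homologically: it invokes the Pontryagin product on $H_{\ast}(\mathbb{T}^{2n};\mathbb{Z})$ (the H-space structure), identifies a class $A\in\pi_1\bigl(\mathcal{L}_{\alpha}\mathbb{T}^{2n}\bigr)$ with the product $p(h(\alpha),h(\beta))\in H_2(\mathbb{T}^{2n};\mathbb{Z})$ of the two circle classes, and then computes the pairing with $[\omega_{\mathrm{std}}]$ inside the exterior algebra $\bigwedge_{\mathbb{Z}}[x_1,\ldots,x_{2n}]$. You instead use the $K(\mathbb{Z}^{2n},1)$ property to classify torus maps by homomorphisms $\mathbb{Z}^2\to\mathbb{Z}^{2n}$, pick the explicit linear representative $\Pi_b(s,t)=sb+t\alpha+\mathrm{const}$, and integrate the pullback directly; the $2\times 2$ minors appear as coefficients of constant forms rather than as coordinates of a wedge product in $H_2$. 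The two arguments encode the same computation (the wedge $h(\alpha)\wedge h(\beta)$ \emph{is} the class of your linear torus), but yours is more elementary and self-contained, while the paper's formulation is what powers Remark \ref{remark}(i): the ``in particular'' statement persists for any closed symplectic manifold whose Pontryagin ring is an exterior algebra. Your writeup is also more complete on two points the paper leaves implicit: the identification of the value set with $h_{\alpha}\mathbb{Z}$ via primitivity and B\'ezout (taking $b$ to be standard basis vectors realizes $\pm e_j$ as values, and the image is the subgroup generated by these, namely $\gcd(e_1,\ldots,e_{2n})\mathbb{Z}=\mathbb{Z}$), and the deduction of the iteration formula from $h_{\alpha^k}=kh_{\alpha}$, $e_{\alpha^k}=e_{\alpha}$. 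One small caution: your aside that primitivity forces ``some coordinate $e_j$ equal to $\pm 1$'' is false (consider $(2,3)$); only the gcd statement you fall back on is correct, and fortunately that is all your B\'ezout argument uses.
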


\begin{proof}
Since the torus $\mathbb{T}^{2n}$ is an $H$-space with a commutative multiplication $\mu$,
it has the associative Pontryagin product $p$, i.e., the bilinear map
\[
	H_i(\mathbb{T}^{2n};\mathbb{Z})\times H_j(\mathbb{T}^{2n};\mathbb{Z})\xrightarrow{\times}H_{i+j}(\mathbb{T}^{2n}\times\mathbb{T}^{2n};\mathbb{Z})\xrightarrow{\mu_{\ast}}H_{i+j}(\mathbb{T}^{2n};\mathbb{Z}),
\]
where $\times$ is the cross product and  $\mu_{\ast}$ is the homomorphism induced by $\mu$.
It is known that the Pontryagin ring $(H_{\ast}(\mathbb{T}^{2n};\mathbb{Z}),p)$ is the exterior algebra
$\left(\bigwedge_{\mathbb{Z}}[x_1,\ldots,x_{2n}],\wedge\right)$ with $\lvert x_i\rvert=1$.
Let $h\colon [S^1,\mathbb{T}^{2n}]\to H_1(\mathbb{T}^{2n};\mathbb{Z})$ be a map given by the formula $h(\alpha)=x_{\ast}([S^1])$
where $x$ is a loop representing $\alpha$ and $[S^1]$ is the fundamental class of $S^1$.

On the other hand, now every element $A$ in $\pi_1\bigl(\mathcal{L}_{\alpha}\mathbb{T}^{2n}\bigr)$
is represented by a map $S^1\times S^1\to \mathbb{T}^{2n}$ such that
$S^1\times\{\ast\}\to \mathbb{T}^{2n}$ represents $\alpha$.
Let $\beta\in [S^1,\mathbb{T}^{2n}]$ be the homotopy class of the map $\{\ast\}\times S^1\to \mathbb{T}^{2n}$.
Hence $A$ is identified with an element $p(h(\alpha),h(\beta))$ in $H_2(\mathbb{T}^{2n};\mathbb{Z})$.
Let us denote $\alpha=(a_1,\ldots,a_{2n})\in [S^1,\mathbb{T}^{2n}]\cong\mathbb{Z}^{2n}$.
Then we have
\begin{align*}
	\left\langle \overline{[\omega_{\mathrm{std}}]},\pi_1\left(\mathcal{L}_{\alpha}\mathbb{T}^{2n}\right)\right\rangle
	&=\left\{\,\left\langle [\omega_{\mathrm{std}}],p(h(\alpha),h(\beta))\right\rangle\relmiddle|\beta\in [S^1,\mathbb{T}^{2n}]\,\right\}\\
	&=\left\{\,\left\langle [\omega_{\mathrm{std}}],%
	\left(%
	\begin{array}{c}
		a_1\\ \vdots\\ a_{2n}
	\end{array}
	\right)%
	\wedge%
	\left(%
	\begin{array}{c}
		b_1\\ \vdots\\ b_{2n}
	\end{array}
	\right)%
	\right\rangle\relmiddle|(b_1,\ldots,b_{2n})\in\mathbb{Z}^{2n}\,\right\}.
\end{align*}
Here the exterior product is computed as
\begin{align*}
	\left(%
	\begin{array}{c}
		a_1\\ \vdots\\ a_{2n}
	\end{array}
	\right)%
	\wedge%
	\left(%
	\begin{array}{c}
		b_1\\ \vdots\\ b_{2n}
	\end{array}
	\right)%
	&=\left(%
	\begin{vmatrix}
		a_1 &b_1 \\
		a_2 &b_2
	\end{vmatrix}
	,-%
	\begin{vmatrix}
		a_1 &b_1 \\
		a_3 &b_3
	\end{vmatrix}
	,%
	\begin{vmatrix}
		a_1 &b_1 \\
		a_4 &b_4
	\end{vmatrix}
	,\cdots,%
	\begin{vmatrix}
		a_{2n-1} &b_{2n-1} \\
		a_{2n} &b_{2n}
	\end{vmatrix}
	\right)\\
	&\in\mathbb{Z}^{n(2n-1)}\cong H_2(\mathbb{T}^{2n};\mathbb{Z}).
\end{align*}
Hence if we denote a point in $\mathbb{T}^{2n}$ by $(p_1,q_1,\ldots,p_n,q_n)$, then for each $i=1,\ldots,n$, we have
\begin{align*}
	\left\langle [dp_i\wedge dq_i],%
	\left(%
	\begin{array}{c}
		a_1\\ \vdots\\ a_{2n}
	\end{array}
	\right)%
	\wedge%
	\left(%
	\begin{array}{c}
		b_1\\ \vdots\\ b_{2n}
	\end{array}
	\right)%
	\right\rangle
	&=%
	\begin{vmatrix}
	a_{2i-1} &b_{2i-1} \\
	a_{2i} &b_{2i}
	\end{vmatrix}
	\int_{\mathbb{T}^2}dp_i\wedge dq_i\\
	&=%
	\begin{vmatrix}
	a_{2i-1} &b_{2i-1} \\
	a_{2i} &b_{2i}
	\end{vmatrix}
	=h_{\alpha}%
	\begin{vmatrix}
		e_{2i-1} &b_{2i-1} \\
		e_{2i} &b_{2i}
	\end{vmatrix}.
\end{align*}
Thus we conclude that
\begin{align*}
	&\left\{\,\left\langle [\omega_{\mathrm{std}}],%
	\left(%
	\begin{array}{c}
		a_1\\ \vdots\\ a_{2n}
	\end{array}
	\right)%
	\wedge%
	\left(%
	\begin{array}{c}
		b_1\\ \vdots\\ b_{2n}
	\end{array}
	\right)%
	\right\rangle\relmiddle|(b_1,\ldots,b_{2n})\in\mathbb{Z}^{2n}\,\right\}\\
	&=h_{\alpha}\left\{\,\sum_{i=1}^n%
	\begin{vmatrix}
		e_{2i-1} &b_{2i-1} \\
		e_{2i} &b_{2i}
	\end{vmatrix}
	\relmiddle|(b_1,\ldots,b_{2n})\in\mathbb{Z}^{2n}\,\right\}.
\end{align*}
Hence we have the desired equality.
\end{proof}

\begin{proof}[Proof of Theorem \ref{theorem:main}]
We assume that Theorem \ref{theorem:main2} holds.
Since $[S^1,\mathbb{T}^{2n}]$ and $H_1(\mathbb{T}^{2n};\mathbb{Z})/\mathrm{Tor}$ are both isomorphic to $\mathbb{Z}^{2n}$,
the assumptions on the free homotopy class $\alpha$ in both theorems are equivalent.
Then Lemma \ref{lemma:key} immediately shows that Theorem \ref{theorem:main}.
\end{proof}

\begin{remark}\label{remark}
(i)\: The latter part of Lemma \ref{lemma:key} holds
for any connected closed symplectic manifold having the Pontryagin product such that the Pontryagin ring is the exterior algebra.
However, it is known that every connected compact symplectic Lie group (i.e., a Lie group with a left-invariant symplectic form)
is a torus \cite{C}.

(ii)\: The proof of Lemma \ref{lemma:key} shows that we can apply Theorem \ref{theorem:main2} for the torus $\mathbb{T}^{2n}=(\mathbb{T}^2)^n$ with a symplectic form
\[
	\omega_A=\sum_{j=1}^n A_j\mathrm{pr}_j^{\ast}\omega_{\mathbb{T}^2},
\]
where $\omega_{\mathbb{T}^2}$ is the standard symplectic form on the 2-torus $\mathbb{T}^2=(\mathbb{R}/\mathbb{Z})^2$,
the map $\mathrm{pr}_j\colon (\mathbb{T}^2)^n\to\mathbb{T}^2$ is the $j$th projection and
$A=(A_1,\ldots,A_n)$ is an $n$-tuple of real numbers satisfying
\[
	\rank_{\mathbb{Z}}\langle A_1,\ldots,A_n\rangle_{\mathbb{Z}}=1.
\]
\end{remark}


\subsection{Proof of Theorem \ref{theorem:main2}}

The proof of Theorem \ref{theorem:main2} is inspired by the argument by B. G\"urel \cite{G}.

\begin{proof}
Since $\mathcal{P}_1(H;[\alpha])$ is finite, there exist finitely many distinct homotopy classes $\alpha_i\in [S^1,M]$
representing $[\alpha]\in H_1(M;\mathbb{Z})/\mathrm{Tor}$ such that every $x\in\mathcal{P}_1(H;[\alpha])$ is contained in one of $\alpha_i$'s.
As in \cite{GG3}, one can show that for every sufficiently large prime $p$, the classes $\alpha_i^p$ are all distinct.

Fix a reference loop $z\in\alpha$ and
choose the iterated loop $z^p$ as the reference loop for $\alpha^p$.
Denote by $x_i$ the elements of $\mathcal{P}_1(H;\alpha)$.
We note that every sufficiently large prime $p$ is \textit{admissible} in the sense of \cite{GG1} for all orbits $x_i$
(i.e., $\lambda^p\neq 1$ for all eigenvalues $\lambda\neq 1$ of $(d\varphi_H)_{x_i}\colon T_{x_i}M\to T_{x_i}M$).
Then under such iterations of $H$, the orbit $x$ remains isolated and
\[
	\mathrm{HF}^{\mathrm{loc}}(H^{\natural p},\bar{x}^p)\cong \mathrm{HF}^{\mathrm{loc}}(H,\bar{x}),
\]
where $\bar{x}=[x,\Pi]\in\overline{\mathcal{L}_{\alpha}M}$ is some lift of $x$
and $\bar{x}^p=[x^p,\Pi^p]\in\overline{\mathcal{L}_{\alpha^p}M}$.
Now we have $\mathrm{HF}^{\mathrm{loc}}(H^{\natural p},\bar{x}^p)\neq 0$ since $\mathrm{HF}^{\mathrm{loc}}(H,\bar{x})\neq 0$.

Assume that $H$ has no simple $p$-periodic orbit in $\alpha^p$.
Since $p$ is prime, all $p$-periodic orbits in $\alpha^p$ are the $p$th iterations of one-periodic orbits in $\alpha$.
Hence we have
\begin{equation}\label{eq:P}
	\mathcal{P}_1(H^{\natural p};\alpha^p)=\left\{\,y^p\relmiddle| y\in\mathcal{P}_1(H;\alpha)\,\right\}.
\end{equation}
Now we claim that we have
\begin{equation}\label{eq:S}
	\mathrm{Spec}(H^{\natural p};\alpha^p)=p\mathrm{Spec}(H;\alpha)
\end{equation}
for every sufficiently large $p$.
Indeed, it is clear that $\mathrm{Spec}(H^{\natural p};\alpha^p)\supset p\mathrm{Spec}(H;\alpha)$.
In order to show the opposite side, let us choose $c\in\mathrm{Spec}(H^{\natural p};\alpha^p)$.
By \eqref{eq:P}, then there exist $\bar{y}\in\overline{\mathcal{P}}_1(H;\alpha)$ and $A\in\pi_1(\mathcal{L}_{\alpha^p}M)$ such that
\[
	c=\mathcal{A}_{H^{\natural p}}(\bar{y}^p\# A)=p\mathcal{A}_H(\bar{y})-\langle \overline{[\omega]},A\rangle.
\]
Now we choose $p$ so large that
\[
	\langle \overline{[\omega]},\pi_1(\mathcal{L}_{\alpha^p}M)\rangle=%
	p\langle \overline{[\omega]},\pi_1(\mathcal{L}_{\alpha}M)\rangle
\]
holds.
Then there exists $B\in\pi_1(\mathcal{L}_{\alpha}M)$ such that
$\langle \overline{[\omega]},A\rangle=p\langle \overline{[\omega]},B\rangle$.
Therefore
\[
	c=p\left(\mathcal{A}_H(\bar{y})-\langle \overline{[\omega]},B\rangle\right)\in p\mathrm{Spec}(H;\alpha).
\]
It implies that $\mathrm{Spec}(H^{\natural p};\alpha^p)\subset p\mathrm{Spec}(H;\alpha)$.

By adding a constant to the Hamiltonian $H$, we can assume that the action of the lift $\bar{x}$ is
$\mathcal{A}_H(\bar{x})=0$.
Hence for all $p$, we have
\[
	\mathcal{A}_{H^{\natural p}}(\bar{x}^p)=p\mathcal{A}_H(\bar{x})=0.
\]
Since $\mathcal{P}_1(H;[\alpha])$ is finite and $\omega$ is $\alpha$-toroidally rational,
we can choose $a>0$ so small that
\[
	[-a,a)\cap\mathrm{Spec}(H;\alpha)=\{0\}.
\]
By \eqref{eq:S}, we then have
\[
	[-pa,pa)\cap\mathrm{Spec}(H^{\natural p};\alpha^p)=\{0\}.
\]
Hence zero is the only critical value of $\mathcal{A}_{H^{\natural p}}$ in $[-pa,pa)$.
Therefore,
\[
	\mathrm{HFN}^{[-pa,pa)}(H^{\natural p};\alpha^p)=\mathrm{HF}^{\mathrm{loc}}(H^{\natural p},\bar{x}^p)\oplus\cdots,
\]
where the dots represent the contributions of the local Floer homology groups of some lifts $\bar{x}_i\in\overline{\mathcal{P}}_1(H;\alpha)$
with zero action, of the other one-periodic orbits such that $x_i^p\in\alpha^p$.
We set
\[
	C=\max\left\{\int_{S^1}\max_M H_t\,dt,0\right\}+\max\left\{-\int_{S^1}\min_M H_t\,dt,0\right\}.
\]
Now we assume $p$ so large that $pa>6C(p'-p)$ since $p'-p=o(p)$ as $p\to\infty$ (see, e.g., \cite{BHP}).
Choose $K>0$ such that
\[
	pa-4C(p'-p)<K<pa-2C(p'-p).
\]
Then we have
\begin{equation}\label{eq:inequality}
	-pa<-K<-K+2C(p'-p)<0<K<K+2C(p'-p)<pa,
\end{equation}
and
\[
	-p'a<-K+C(p'-p)<0<K+C(p'-p)<p'a.
\]
We set $\delta=C(p'-p)$.
Now we have the following commutative diagram:
\[
	\xymatrix{
	\mathrm{HFN}^{[-K,K)}(H^{\natural p};\alpha^p) \ar[d]_{\sigma_{H^{\natural p'}H^{\natural p}}} \ar[rrd]^{\cong} & & \\
	\mathrm{HFN}^{[-K+\delta,K+\delta)}(H^{\natural p'};\alpha^p) \ar[rr]_{\sigma_{H^{\natural p}H^{\natural p'}}} & & \mathrm{HFN}^{[-K+2\delta,K+2\delta)}(H^{\natural p};\alpha^p)
	}
\]
Here the map $\sigma_{H^{\natural p'}H^{\natural p}}$ (resp.\ $\sigma_{H^{\natural p}H^{\natural p'}}$) is induced by
a linear homotopy from $H^{\natural p}$ to $H^{\natural p'}$ (resp.\ from $H^{\natural p'}$ to $H^{\natural p}$),
and the diagonal map is an isomorphism induced by the natural quotient-inclusion map (see \eqref{eq:inequality}).
Combining with $\mathrm{HFN}^{[-K,K)}(H^{\natural p};\alpha^p)\neq 0$, we conclude that
\[
	\mathrm{HFN}^{[-K+\delta,K+\delta)}(H^{\natural p'};\alpha^p)\neq 0.
\]
Thus $H$ has a $p'$-periodic orbit $y$ in the homotopy class $\alpha^p$, and hence in the homology class $p[\alpha]$.

Now it is enough to show that $y$ is simple.
Arguing by contradiction, we assume that $y$ is not simple.
Since $p'$ is prime, $y$ is the $p'$th iteration of a one-periodic orbit in the homology class $p[\alpha]/p'\in H_1(M;\mathbb{Z})/\mathrm{Tor}$.
Since $p/p'$ is not an integer, this contradicts the assumption that $[\alpha]\neq 0\in H_1(M;\mathbb{Z})/\mathrm{Tor}$.
\end{proof}


\subsection*{Acknowledgement}

The author would like to express his sincere gratitude to Urs Frauenfelder for many fruitful discussions.
The author is also grateful to Viktor Ginzburg, Ba\c{s}ak G\"urel, Morimichi Kawasaki, Yoshihiko Mitsumatsu and the author's advisor Takashi Tsuboi for many valuable comments.
This work was carried out while the author was visiting the University of Augsburg.
The author would like to thank the institute for its warm hospitality and support.


\bibliographystyle{amsart}

\begin{thebibliography}{10}
\bibitem[BPS]{BPS} P. Biran, L. Polterovich and D. Salamon,
	{\it Propagation in Hamiltonian dynamics and relative symplectic homology},
	Duke Math.\ J. \textbf{119} (2003), no.~1, 65--118.
\bibitem[BH]{BH} D. Burghelea and S. Haller,
	{\it Non-contractible periodic trajectories of symplectic vector fields, Floer cohomology and symplectic torsion},
	 ArXiv Mathematics e-prints (2001).
\bibitem[BHP]{BHP} R. Baker, G. Harman and J. Pintz,
	{\it The difference between consecutive primes, II},
	Proc.\ Lond.\ Math.\ Soc.\ (3), \textbf{83} (2001), 532--562.
\bibitem[Ch]{C} B. Chu,
	{\it Symplectic homogeneous spaces},
	Trans.\ Amer.\ Math.\ Soc.\ \textbf{197} (1974), 145--159.
\bibitem[Fl]{F4} A. Floer,
	{\it Symplectic fixed points and holomorphic spheres},
	Comm.\ Math.\ Phys.\ \textbf{120} (1989), 575--611.
\bibitem[FHS]{FHS} A. Floer, H. Hofer and D. Salamon,
	{\it Transversality in elliptic Morse theory for the symplectic action},
	Duke Math.\ J. \textbf{80} (1995), 251--292. 
\bibitem[GG10]{GG1} V. Ginzburg and B. G\"urel,
	{\it Local Floer homology and the action gap},
	J. Symplectic Geom.\ \textbf{8} (2010), no.~3, 323--357. 
\bibitem[GG16]{GG3} V. Ginzburg and B. G\"urel,
	{\it Non-contractible periodic orbits in Hamiltonian dynamics on closed symplectic manifolds},
	Compos.\ Math.\ \textbf{152} (2016), no.~9, 1777--1799.
\bibitem[G\"u]{G} B. G\"urel,
	{\it On non-contractible periodic orbits of Hamiltonian diffeomorphisms},
	Bull.\ Lond.\ Math.\ Soc.\ \textbf{45} (2013), no.~6, 1227--1234. 
\bibitem[Sa]{S} D. Salamon,
	{\it Lectures on Floer homology},
	in Symplectic Geometry and Topology (Park City, Utah, 1997),
	IAS/Park City Math.\ Ser.\ \textbf{7}, Amer.\ Math.\ Soc., Providence, 1999, 143--230.
\bibitem[SZ]{SZ} D. Salamon and E. Zehnder,
	{\it Morse theory for periodic solutions of Hamiltonian systems and the Maslov index},
	Comm.\ Pure Appl.\ Math.\ \textbf{45} (1992), 1303--1360.
\end{thebibliography}

\end{document}